\numberwithin{equation}{section}
\newtheorem{theorem}{\textbf{Theorem}}[section]
\newtheorem{remark}[theorem]{Remark}
\newcommand{\beqnar}{\begin{eqnarray*}}
\newcommand{\eeqnar}{\end{eqnarray*}}
\newcommand{\ba}{\begin{array}}
\newcommand{\ea}{\end{array}}
\newenvironment{proof}[1]{\begin{trivlist}\item {\it
\bf Proof.}\quad} {\qed\end{trivlist}}
\journal{}
\begin{document}

\begin{frontmatter}



\title{On Bernstein Type Inequalities  for Stochastic Integrals of Multivariate Point Processes}


\author{Hanchao Wang\footnote{Corresponding author, email: wanghanchao@sdu.edu.cn.}}

\address{ Zhongtai Security Institute for Financial Studies,  Shandong University,  Jinan,  250100, PRC
    }
  \author{Zhengyan Lin, Zhonggen Su}

\address{ School of Mathematical Sciences, Zhejiang University, Hangzhou, 310027, PRC
    }

\begin{abstract}
  We consider the stochastic integrals of multivariate point processes and study their concentration phenomena. In particular, we obtain a Bernstein type of concentration inequality through  Dol\'{e}ans-Dade exponential formula and a uniform exponential inequality using a generic chaining argument. As applications, we obtain a upper bound for a sequence of discrete time martingales indexed by a class of functionals, and so  derive the rate of  convergence for nonparametric maximum likelihood estimators, which is an improvement of earlier work of van de Geer.
\end{abstract}

\begin{keyword}
Bernstein  inequality, Dol\'{e}ans-Dade exponential formula,  Generic chaining method,  Multivariate point process.

\end{keyword}

\end{frontmatter}


\section{Introduction}

There have been a lot of research activities around phenomena of measure concentration in the past decades. The reader is referred to excellent books like Ledoux and Talagrand \cite{lt}, Ledoux \cite{l} and nice paper like Talagrand \cite{t0} for remarkable results and powerful methods. A primary purpose of the present paper is to establish a Bernstein type  exponential concentration inequality for stochastic integrals of multivariate point processes.

 For sake of statement, we will begin with a classical Bernstein inequality for sums of independent random variables. Assume that $(\Omega,  \cal{F}, \textsf{P})$ is a   probability space so large  that we can construct all random objects of interest in it. Let $\xi_{1}$, $\xi_{2}$, $\cdots$ be  a sequence of centered independent random variables with finite variance, and denote
$S_{n}=\xi_{1}+\xi_{2}+\cdots+\xi_{n}$. If there exists a certain  constant $a>0$ such that
\begin{equation}
\textsf{E}[|\xi_k|^p]\le \frac{p!a^{p-2}}{2}\textsf{E}[\xi_k^2], \quad p\ge 2\label{obc}
\end{equation}
  then
    \begin{equation}
    \textsf{P}(|S_{n}|\ge nx)\le 2\exp(-\frac{nx^{2}}{2(K+ax)})
    \label{ob2c}
    \end{equation}
  for all $x>0$ and for all $K$ satisfying $Var(S_{n})=\sum_{i=1}^{n}\textsf{E}\xi_{n}^{2}\le K$.

  (\ref{obc}) was due to  Bernstein \cite{br}, and so (\ref{obc}) is now referred as Bernstein condition. Since then  various extensions and improvement have appeared in literature, among which are Bennett \cite{b,b1}, Hoeffding \cite{h},  Freedman \cite{f},  Bentkus \cite{be1, be2},  Fan et al. \cite{fgl}. A very recent nice book is Bercu et al \cite{bdr} which gives a very clear exposition on concentration inequalities for sums of independent random variables and martingales.

An important extension of Bernstein inequality is to both discrete time martingales and continuous time martingales. In particular,  Freedman \cite{f} first obtained the  Bernstein  inequality of discrete time martingales with bounded jumps, and  then  Shorack and Wellner \cite{sw} extended Freedman's result to continuous time martingales. More precisely, let $(\Omega, \mathcal{F}, (\mathcal{F}_{t})_{t\in [0,T]},\textsf{ P})$  be a stochastic basis, $\{M_{t}\}_{t\ge 0}$ be a locally square integrable martingale with respect to the filtration $\{\mathcal{F}_{t}\}_{t \in [0,T]}$ with $M_{0}=0$. Denote the jump by $\Delta M_{t}=M_t-M_{t-}$ and  the predictable variation by $V_{t}=<M, M>_{t}$, $t>0$.  Assume that
 \begin{equation}
    |\Delta M_{t}|\le K, \quad t>0 \label{BJump}
    \end{equation}
for a positive constant $K$. Then  for each $x, y>0$,
\begin{equation}
\textsf{P}(M_{t}\ge x~\text{and}~V_{t}\le y^{2}~\text{for some}~t)\le \exp\Big(-\frac{x^{2}}{2(xK+y^{2})}\Big). \label{ShW}
\end{equation}

The bounded jump assumption (\ref{BJump}) can be relaxed. In fact, van de Geer \cite{v} improved the above result under Bernstein condition. For each $m \ge 2$, consider the process $\{\sum_{s\le t}|\Delta M_{s}|^{m}\}_{t\ge 0}$ and its predictable compensator $\{ V_{m, t}\}_{t\ge 0}$. If there exist a constant $K>0$ and a predictable process $\{R_{t}\}_{t\ge 0}$ such that
\begin{equation}
V_{m,t}\le \frac{m!}{2}K^{m-2}R_{t}, \quad t\ge 0, \label{Geer0}
\end{equation}
 then  for each $x, y>0$,
\begin{equation}
\textsf{P}(M_{t}\ge x~\text{and}~V_{t}\le y^{2}~\text{for some}~t)\le \exp\Big(-\frac{x^{2}}{2(xK+y^{2})}\Big). \label{Geer1}
\end{equation}
We remark that any locally square integrable martingale can be represented as the sum of continuous local martingale and pure jump local martingale. The nonzero  continuous local martingale part indeed played a crucial role in the proof of  both (\ref{ShW}) and (\ref{Geer1}). Now it is natural to ask what happens for a pure jump local martingale.  It is an interesting and challenging mathematical problem to establish a concentration inequality for general pure jump local martingales. We shall restrict ourselves to stochastic integrals of multivariate point processes.

Let $(E, {\cal E})$ be a Blackwell space. Assume that $\{T_k\}_{k\ge 1}$  be a sequence of strictly increasing positive  random  variables, $\{X_k\}_{k\ge 1}$ a sequence of $E$-valued random variables and $X_k$ is measurable with respect to  ${\cal F}_{T_k}$ for each $k\ge 1$. A multivariate point process  is an integer-valued random variables defined by
\begin{equation}
 \mu(dt,dx)=\sum_{k\ge 1}1_{\{T_{k}<\infty\}}\varepsilon_{(T_{k},X_{k})}(dt\times dx). \label{mp}
 \end{equation}

 We note that Poisson point process and compound Poisson point process are classic and well-studied examples of   multivariate point processes.  We shall be interested in stochastic integrals of a predictable process with respect to  the measure $\mu$. Let $\nu$ be the predictable compensator of $\mu$ and assume that $\nu$ admits the disintegration
\begin{equation}
 \nu(\omega, dt,dx)= dA_t(\omega)K_{\omega, t}(dx). \label{mp1}
 \end{equation}
 where $K$ is a transition probability from $(\Omega\times [0,T], \mathcal{P})$ in to $(E,\mathcal{E})$, $A$ is an increasing c\`{a}dl\'{a}g predictable process.  Denote $a_{t}=\nu(\{t\}\times \mathbb{R})$. It is easy to see that the process $a\equiv 0$ if $\mu$ is a L\'{e}vy point process. However, what we are more interested in the case $a\neq 0$, namely $\Delta A_t\neq 0$.

Given a predictable function $W$ on $\tilde{\Omega}$, $\tilde{\Omega}=\Omega\times \mathbb{R}_{+}\times \mathbb {R}$, define the stochastic integral
\begin{equation}
\hat{W}_{t}=\int_{\mathbb{R}}W(t,x)\nu(\{t\}\times dx). \label{StIn}
\end{equation}
In addition, put
\begin{equation}
 \Xi(W)_{t}= \max\{0, (W-\hat{W})\}*\nu_t+(1-a_s)\max\{0, -\hat{W}_t\}. \label{Zeta}
\end{equation}
and
\begin{equation}
 C(W)_t=\langle W*(\mu-\nu),W*(\mu-\nu)\rangle_t . \label{CW}
\end{equation}
An easy computation, see Chapter 2 of Jacod and Shiryaev \cite {js},  implies
\begin{equation}
 C(W)_t=(W-\hat{W})^2*\nu_t+\sum_{s\le t}(1-a_s)(\hat{W}_s)^2. \label{CW2}
\end{equation}

Motivated by (\ref{CW2}), we introduce the following quantities
\begin{equation}
Q(W,m)_t=\max\{0, (W-\hat{W})\}^m*\nu_t+\sum_{s\le t}(1-a_s)\max\{0, -\hat{W}\}^m, \quad m\ge 3. \label{QW2}
\end{equation}
The Bernstein inequality for $W*(\mu-\nu)$ reads as follows
\begin{theorem}\label{BESTIN}
 Suppose that for all $t>0$ and some $0<K<\infty$
 \begin{equation}
 \Xi(W)_{t}\le K, \quad Q(W,m)_t\le \frac{m!}{2}K^{m-2}C(W)_t, \quad m\ge 3. \label{CW3}
 \end{equation}
  Then for each $x>0,$ $y>0$,
\begin{equation}
\textsf{P}(|W*(\mu-\nu)_{t}|\ge x~\text{and}~C(W)_t\le y^{2}~\text{for some}~t)\le \exp(-\frac{x^{2}}{2(xK+y^{2})}). \label{CW4}
\end{equation}

\end{theorem}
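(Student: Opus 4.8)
The plan is to follow the classical exponential-martingale recipe adapted to the pure-jump setting, using the Doléans-Dade exponential to manufacture a supermartingale, and then to optimize over a free parameter. First I would fix $\lambda>0$ and form the stochastic exponential $\mathcal{E}(\lambda W*(\mu-\nu))_t$. Since $\mu$ is a multivariate point process, this exponential has the explicit product/exponential representation from Chapter~II of Jacod--Shiryaev \cite{js}: its logarithm splits into a compensated-integral piece $\lambda W*(\mu-\nu)_t$ minus a ``drift'' of the form $\bigl(e^{\lambda W}-1-\lambda W\bigr)*\nu_t$ together with the correction terms coming from the fixed times of discontinuity of $A$ (the points where $a_s\neq 0$), which is precisely why the quantities $\hat W$, $\Xi(W)$ and $Q(W,m)$ were introduced. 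The point of \eqref{CW3} is exactly to control this drift.

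The key step is then the deterministic inequality
\[
e^{u}-1-u \le \frac{u^2}{2}\,\phi(\lambda K)\quad\text{for }u\le \lambda K,\qquad \phi(r)=\sum_{m\ge 2}\frac{2\,r^{m-2}}{m!},
\]
applied pointwise with $u=\lambda(W-\hat W)$ on the ``jump'' part and an analogous bound with $u=-\lambda\hat W$ on the ``fixed-time'' part; here the boundedness hypothesis $\Xi(W)_t\le K$ is what guarantees the relevant increments do not exceed $\lambda K$, and the Bernstein-type moment domination $Q(W,m)_t\le \frac{m!}{2}K^{m-2}C(W)_t$ lets us sum the series and bound the whole drift by $\tfrac{\lambda^2}{2}\,\phi(\lambda K)\,C(W)_t$. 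Consequently, writing $\rho=\lambda K$, the process
\[
Z_t^{\lambda}=\exp\!\Bigl(\lambda\, W*(\mu-\nu)_t-\tfrac{\lambda^2}{2(1-\rho)}\,C(W)_t\Bigr)
\]
(using $\phi(r)\le (1-r)^{-1}$ for $0<r<1$) is a nonnegative supermartingale with $Z_0^{\lambda}=1$, provided $\rho<1$. I would check this by verifying that $\mathcal{E}(\lambda W*(\mu-\nu))$ is a genuine (local) martingale dominating $Z^{\lambda}$, or directly that the predictable drift of $\log Z^{\lambda}$ is nonpositive under \eqref{CW3}.

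Granting the supermartingale property, the tail bound follows by a standard stopping-and-Markov argument. On the event $\{W*(\mu-\nu)_t\ge x,\ C(W)_t\le y^2\ \text{for some }t\}$ one has $Z_t^{\lambda}\ge \exp\bigl(\lambda x-\tfrac{\lambda^2 y^2}{2(1-\lambda K)}\bigr)$ at the corresponding (stopping) time, so by the maximal inequality / optional stopping for the nonnegative supermartingale $Z^{\lambda}$ the probability of that event is at most $\exp\bigl(-\lambda x+\tfrac{\lambda^2 y^2}{2(1-\lambda K)}\bigr)$; the same applies to $-W$ for the two-sided statement. Optimizing the exponent over $\lambda\in(0,1/K)$ — the choice $\lambda=x/(xK+y^2)$ does it, which indeed lies in $(0,1/K)$ — yields exactly $\exp\!\bigl(-x^2/(2(xK+y^2))\bigr)$. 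I expect the main obstacle to be the first step: getting the exact Doléans-Dade expansion for the stochastic exponential of a multivariate point process, correctly accounting for the fixed times of discontinuity of $A$ (the $a_s\neq 0$ terms) and matching them term-by-term with the definitions of $\Xi(W)$, $C(W)$ and $Q(W,m)$, and then justifying the localization needed to pass from a local martingale/local supermartingale to the desired global tail inequality without integrability assumptions on $W$.
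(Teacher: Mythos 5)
Your plan is, in outline, the same as the paper's proof: build an exponential (local) martingale via the Dol\'{e}ans-Dade exponential, bound the exponential compensator by $\frac{\lambda^{2}}{2(1-\lambda K)}C(W)_t$ using (\ref{CW3}), stop at the first time the integral exceeds $x$, apply Markov's inequality, and optimize with $\lambda=x/(xK+y^{2})$; the paper does exactly this, so the skeleton of your argument is sound.

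Two of your verification steps, however, would not survive as stated. First, the route ``show $\mathcal{E}(\lambda W*(\mu-\nu))$ dominates $Z^{\lambda}$'' fails: since $1+u\le e^{u}$ one has $\mathcal{E}(\lambda M)_t=e^{\lambda M_t}\prod_{s\le t}(1+\lambda\Delta M_s)e^{-\lambda\Delta M_s}\le e^{\lambda M_t}$, i.e.\ the comparison goes the wrong way, and the jump product depends on the realized jumps, which cannot be compared pathwise with the predictable quantity $C(W)_t$. What works (and is your second alternative, made precise, and the paper's actual argument) is to introduce the predictable process $S(\lambda)_t=(e^{\lambda(W-\hat W)}-1-\lambda(W-\hat W))*\nu_t+\sum_{s\le t}(1-a_s)(e^{-\lambda \hat W_s}-1+\lambda\hat W_s)$ --- built from $W-\hat W$ and $\hat W$ with the $(1-a_s)$ corrections at fixed times of discontinuity, not from $(e^{\lambda W}-1-\lambda W)*\nu$ --- check $\Delta S(\lambda)>-1$, show by two applications of It\^{o}'s formula that $e^{\lambda X}/\mathcal{E}(S(\lambda))$ is a local martingale, and then use $\mathcal{E}(S(\lambda))\le e^{S(\lambda)}$ to get $\textsf{E}\,e^{\lambda X_{\sigma}-S(\lambda)_{\sigma}}\le 1$ at the stopping time. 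Second, $\Xi(W)_t\le K$ is an \emph{integrated} bound, not a pointwise bound on the increments, so your deterministic inequality restricted to $u\le\lambda K$ is not the mechanism by which the drift is controlled; instead one expands $S(\lambda)$ as $\sum_{m\ge 2}\frac{\lambda^{m}}{m!}(\cdots)$, observes that only the positive parts of $W-\hat W$ and $-\hat W$ need the terms of order $m\ge 3$, and applies $Q(W,m)_t\le\frac{m!}{2}K^{m-2}C(W)_t$ termwise, which requires only $\lambda K<1$. A final small point: handling the two-sided event by applying the one-sided bound to $\pm W$ produces a factor $2$ by the union bound, which the stated (\ref{CW4}) does not have; the paper instead reduces to $W>0$ at the outset.
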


The proof of Theorem \ref{BESTIN} will be given in Section 2. A key ingredient is Dol\'{e}ans-Dade exponential formula for semimartingales with given predictable characteristics.

Next let us turn to consider the uniform bound for a family of stochastic integrals of predictable processes with respect to multivariate point process.
Let   $(\Psi, d)$  be a metric space, $\mathcal{W}=\{W^\psi:\psi\in \Psi\}$  a family of predictable functions on $\Omega \times \mathbb{R}_{+}\times \mathbb{R}$.

Fix a $T>0$.  We denote  $X^\psi=W^\psi*(\mu-\nu)_T$ and define two metrics as follows

\begin{equation}
d_{1}(\psi_1,\psi_2)=||\Xi(W^{\psi_1}-W^{\psi_2})_T||_{\infty},
\end{equation}

\begin{equation}
d_{2}(\psi_1,\psi_2)=\sqrt{||C(W^{\psi_1}-W^{\psi_2})_T||_{\infty}}
\end{equation}
where $||\cdot||_{\infty}$ stands for norm of $L^{\infty}$.

By Theorem \ref{BESTIN}, one easily can obtain
\begin{equation}
        \textsf{P}(|X^{\psi_{1}}-X^{\psi_{2}}|>x)\le  \exp\Big(-\frac{x^{2}}{2(d_{2}^{2}(\psi_{1},\psi_{2})+d_{1}(\psi_{1},\psi_{2})x)}\Big). \label{c1}
 \end{equation}
As known to us, (\ref{c1}) is a certain increment condition. We can further derive a uniform inequality for $\sup_{\psi\in \Psi}{X^\psi}$ using a generic chaining method as in Talagrand \cite{t}. To this end, we need to introduce more notations.  For a  given metric space $(\Psi, d)$, an increasing sequence $(\mathcal{A}_n)_{n\ge 1}$ of partitions of $\Psi$ is called as admissible sequence if    $\sharp\mathcal{A}_n\le 2^{2^{n}}$. Denote by $A_n(\psi)$ the unique of element of $(\mathcal{A}_n)$ containing $\psi$, and denote by $\Upsilon_d(A_n(\psi))$ the diameter of $A_n(\psi)$ under $d$. In addition,  let
\begin{equation}
\gamma_\alpha(\Psi,d)=\inf\sup_{\psi\in \Psi} \sum_{n\ge 0}2^{n/\alpha}\Upsilon_d(A_n(\psi)), \quad \label{TalaGamma}
 \end{equation}
where the infimum is taken over all admissible sequences. We can now state a uniform inequality for $\sup_{\psi\in \Psi}{X^\psi}$ in terms of $\gamma_1$ and $\gamma_2$.
\begin{theorem}\label{mui}
Suppose that  for all $\psi\in \Psi$ and some $0< K<\infty$
 \begin{equation}
 \Xi(W^{\psi})_{T}\le K, \quad C(W^{\psi})_T^m\le \frac{m!}{2}K^{m-2}C(W^{\psi})_T, \quad m\ge 3. \label{mu2}
  \end{equation}
 Then   we have
\begin{equation}
\textsf{P} \big(\sup_{\psi\in \Psi}|X^\psi|\ge Cu (\gamma_2(\Psi,d_{2})+\gamma_1(\Psi,d_{1}))\big)\le C\exp(-\frac{u}{2}). \label{ui}
\end{equation}
Moreover, it follows
\begin{equation}
\textsf{E} \sup_{\psi\in \Psi}X^\psi\le C \big(\gamma_2(\Psi,d_{2})+\gamma_1(\Psi,d_{1})\big).\label{mr}
\end{equation}
\end{theorem}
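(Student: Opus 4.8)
The plan is to derive Theorem~\ref{mui} from the increment inequality (\ref{c1}) by a generic chaining argument carried out simultaneously for the two metrics $d_1$ and $d_2$, as in Talagrand~\cite{t}. The key observation is that (\ref{c1}) is a mixed Bernstein-type increment bound --- subgaussian at scale $d_2$ and subexponential at scale $d_1$ --- so the chain must charge these two contributions at the respective rates that reproduce $\gamma_2$ and $\gamma_1$. By a standard reduction we may assume $\Psi$ is countable (pass to a countable dense subset; the right-hand sides of (\ref{ui}) and (\ref{mr}) depend only on the partitions) and fix a base point $\psi_0\in\Psi$; since $X^{\psi_0}=W^{\psi_0}*(\mu-\nu)_T$ is a mean-zero martingale terminal value it suffices to bound the centered supremum $\sup_{\psi}(X^\psi-X^{\psi_0})$, and in the intended applications one may simply take $W^{\psi_0}\equiv 0$ so that $X^{\psi_0}=0$. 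As a first step, solving the quadratic inequality $r^2\ge 2v\,d_2^2(\psi_1,\psi_2)+2v\,d_1(\psi_1,\psi_2)\,r$ and feeding its positive root into (\ref{c1}) gives, for every $v>0$,
\[
\textsf{P}\Big(|X^{\psi_1}-X^{\psi_2}|\ge 2\sqrt{v}\,d_2(\psi_1,\psi_2)+4v\,d_1(\psi_1,\psi_2)\Big)\le e^{-v},
\]
which is the form of the increment estimate used along the chain.

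Next I would build the chain. Fix $\eta>0$ and choose admissible sequences $(\mathcal{A}_n^{(1)})$ and $(\mathcal{A}_n^{(2)})$ of $\Psi$ with $\sup_\psi\sum_{n\ge0}2^{n}\Upsilon_{d_1}(A_n^{(1)}(\psi))\le(1+\eta)\gamma_1(\Psi,d_1)$ and $\sup_\psi\sum_{n\ge0}2^{n/2}\Upsilon_{d_2}(A_n^{(2)}(\psi))\le(1+\eta)\gamma_2(\Psi,d_2)$. Set $\mathcal{A}_0=\{\Psi\}$ and, for $n\ge1$, let $\mathcal{A}_n$ be the common refinement of $\mathcal{A}_{n-1}^{(1)}$ and $\mathcal{A}_{n-1}^{(2)}$; then $\sharp\mathcal{A}_n\le 2^{2^{n-1}}\cdot 2^{2^{n-1}}=2^{2^n}$, so $(\mathcal{A}_n)$ is admissible and $A_n(\psi)\subseteq A_{n-1}^{(1)}(\psi)\cap A_{n-1}^{(2)}(\psi)$. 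For each $n$ pick $\pi_n\colon\Psi\to\Psi$ that is constant on the members of $\mathcal{A}_n$ with $\pi_n(\psi)\in A_n(\psi)$ and $\pi_0\equiv\psi_0$; since $\gamma_1,\gamma_2<\infty$ force $\Upsilon_{d_i}(A_n(\psi))\to 0$, the partitions may be taken to separate points, so that $X^\psi-X^{\psi_0}=\sum_{n\ge1}\big(X^{\pi_n(\psi)}-X^{\pi_{n-1}(\psi)}\big)$.

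Then I would run the union bound. Fix $u\ge1$ and apply the increment estimate with $v=2^n u$ to each of the at most $\sharp\mathcal{A}_n\cdot\sharp\mathcal{A}_{n-1}\le 2^{2^{n+1}}$ pairs $(\pi_n(\psi),\pi_{n-1}(\psi))$ occurring at level $n$; as both endpoints lie in $A_{n-1}(\psi)$ one has $d_i(\pi_n(\psi),\pi_{n-1}(\psi))\le\Upsilon_{d_i}(A_{n-1}(\psi))$. Hence off an event $\Omega_u$ with $\textsf{P}(\Omega_u)\le\sum_{n\ge1}2^{2^{n+1}}e^{-2^n u}\le Ce^{-u/2}$ we have, for all $\psi$ and all $n\ge1$,
\[
|X^{\pi_n(\psi)}-X^{\pi_{n-1}(\psi)}|\le 2\cdot 2^{n/2}\sqrt{u}\,\Upsilon_{d_2}(A_{n-1}(\psi))+4\cdot 2^{n}u\,\Upsilon_{d_1}(A_{n-1}(\psi)).
\]
Summing over $n$, using $A_{n-1}(\psi)\subseteq A_{n-2}^{(i)}(\psi)$ and re-indexing, the two resulting series are bounded by $C\gamma_2(\Psi,d_2)$ and $C\gamma_1(\Psi,d_1)$, so on $\Omega_u^{c}$
\[
\sup_\psi\big(X^\psi-X^{\psi_0}\big)\le C\sqrt{u}\,\gamma_2(\Psi,d_2)+Cu\,\gamma_1(\Psi,d_1)\le Cu\big(\gamma_2(\Psi,d_2)+\gamma_1(\Psi,d_1)\big),
\]
which gives (\ref{ui}) (for the small $u$ with $Ce^{-u/2}\ge 1$ it is vacuous). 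Integrating this tail, $\textsf{E}\sup_\psi(X^\psi-X^{\psi_0})=\int_0^\infty\textsf{P}\big(\sup_\psi(X^\psi-X^{\psi_0})>t\big)\,dt\le C(\gamma_2(\Psi,d_2)+\gamma_1(\Psi,d_1))$, and since $\textsf{E}X^{\psi_0}=0$ this is (\ref{mr}).

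The main obstacle is the third step: one has to split each chaining increment into a $d_2$-part and a $d_1$-part and charge them with weights $2^{n/2}$ and $2^{n}$ so that, after summation, they reconstitute exactly $\gamma_2(\Psi,d_2)$ and $\gamma_1(\Psi,d_1)$, while keeping the union bound over the $\le 2^{2^{n+1}}$ pairs at level $n$ summable against the single family of thresholds $v_n=2^nu$. Reconciling these two budgets is precisely what dictates using one admissible sequence formed as the common refinement of the two nearly-optimal sequences (with the index shift that preserves $\sharp\mathcal{A}_n\le 2^{2^n}$); the remaining work --- tracking constants and obtaining the $e^{-u/2}$ rate --- is routine.
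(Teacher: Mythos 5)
Your proposal is correct and follows essentially the same route as the paper's proof: reformulating the Bernstein increment bound (\ref{c1}) as a tail estimate at level $u\,d_1+\sqrt{u}\,d_2$, chaining along the common refinement (with index shift) of near-optimal admissible sequences for $d_1$ and $d_2$, applying the union bound with thresholds $2^nu$, and integrating the resulting tail to get (\ref{mr}). The only differences are cosmetic (explicit constants in the increment estimate and the remark about the base point $\psi_0$), so there is nothing substantive to add.
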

The proof of Theorem \ref{mui} will also be given in Section 2. As applications, we will obtain a Bernstein type exponential inequality for a class of functional index empirical processes and so derive a convergence rate for nonparametric maximum likelihood estimators.  This is the content of Section 3.

\section{Proofs of Theorems 1 and 2}

{\bf Proof of Theorem \ref{BESTIN} } Clearly, it follows
    \begin{equation}
    \textsf{P}(|W*(\mu-\nu)_{t}|\ge x)\le\textsf{P}(|W|*(\mu-\nu)_{t}\ge x). \nonumber
    \end{equation}
 So  without loss of generality, we can and do assume $W>0$. For simplicity of notation,  put
\begin{equation}
X_{t}=W*(\mu-\nu)_{t}, \nonumber
 \end{equation}
 and
 \begin{equation}
 S(\lambda)_{t}=\int_{0}^{t}\int_{\mathbb{R}}(e^{\lambda (W-\hat{W})}-1-\lambda (W-\hat{W}))\nu(ds,dx)+\sum_{s\le t}(1-a_{s})(e^{-\lambda \hat{W}_{s}}-1+\lambda \hat{W}_{s}),  \nonumber
  \end{equation}
where $0<\lambda<\frac{1}{K}$. Note for any semimartingale $Y$,  the Dol\'{e}ans-Dade exponential  is
   \begin{equation}
    \mathcal{E}(Y)_{t}=e^{Y_{t}-Y_{0}-\frac{1}{2}<Y^{c},Y^{c}>_{t}}\prod_{s\le t}(1+\Delta Y_{s})e^{-\Delta Y_{s}}.\nonumber
    \end{equation}
 Since
\begin{eqnarray*}
                \Delta S(\lambda)_{t}=\frac{\int e^{\lambda W}\nu(\{t\},dx)-a_te^{\lambda \hat{W}_{t}}+(1-a_t)(1-e^{\lambda \hat{W}_{t}})}{e^{\lambda \hat{W}_{t}}}\nonumber
 \end{eqnarray*}
             and
 \begin{equation}
 \int e^{\lambda W}\nu(\{t\},dx)+1-a_t>0, \nonumber
  \end{equation}
         we can obtain
    \begin{equation}
    \Delta S(\lambda)_{t} >-1\nonumber
     \end{equation}
for all $t>0$.

We shall first show the the process  $\Big(e^{\lambda X}/\mathcal{E}(S(\lambda))\Big)_{t\ge 0}$ is a local martingale. For $X$, the jump part of $X$ is
   \begin{equation}
   \Delta X=(W-\hat{W})1_{D}-\hat{W}1_{D^{c}},
    \end{equation}
where $D$ is the thin set, which is  exhausted by $\{T_n\}_{n\ge 1}$.

We denote  by $\mu^X$ the jump measure of $X$. Let $\nu^X$ be the predictable compensator of $\mu^X$, and
 \begin{equation}
 x*\nu^{X}_{t}=(W-\hat{W})*\nu_t-\sum_{s\le t}\hat{W}_{s}(1-a_{s}).
 \end{equation}
The It\^{o} formula yields
        \begin{eqnarray*}
        e^{\lambda X_{t}}&=&1+\lambda e^{\lambda X_{t-}}\cdot X+\sum_{s\le t}(e^{\lambda X_{s}}-e^{\lambda X_{s-}}-\lambda e^{\lambda X_{s-}}\Delta X_{s} )\\
                          &=&1+\lambda e^{\lambda X_{t-}}\cdot X+e^{\lambda X_{-}}\cdot (e^{\lambda x}-1-\lambda x)*\mu^{X}.
        \end{eqnarray*}
Furthermore,
        \begin{equation}
        S(\lambda)= (e^{\lambda x}-1-\lambda x)*\nu^{X}.
        \end{equation}
     We obtain that
     \begin{equation}
     e^{\lambda X}-e^{\lambda X_{-}}\cdot S(\lambda)
      \end{equation}
     is a local martingale. Set $H=e^{\lambda X}$, $G=\mathcal{E}(S(\lambda))$, $A=S(\lambda)$ and $f(h,g)=\frac{h}{g}.$  The  It\^{o} formula yields
             \begin{eqnarray*}
             f(H,G)&=&1+\frac{1}{G_{-}}\cdot H-\frac{H_{-}}{G^{2}_{-}}\cdot G\\
             & &+\sum_{s\le \cdot}\Big(\Delta f(H,G)_{s}-\frac{\Delta H_{s}}{G_{s-}}+\frac{f(H,G)_{s-}}{G_{s-}}\Delta G_{s}\Big).
             \end{eqnarray*}
Let $N_1=H-H_{-}\cdot A$, and note $N_1$  is also a local martingale. We have
     \begin{equation}
     \frac{1}{G_{-}}\cdot H=\frac{1}{G_{-}}\cdot N_1+\frac{H_{-}}{G_{-}}\cdot  S(\lambda).
     \end{equation}
  By the definition of $\mathcal{E}(A)$, we have $G=1+G_{-}\cdot A$, thus
     \begin{equation}
     \frac{H_{-}}{G^{2}_{-}}\cdot G=\frac{H_{-}}{G_{-}}\cdot  A.
     \end{equation}
  Then
      \begin{equation}
      \frac{1}{G_{-}}\cdot H-\frac{H_{-}}{G^{2}_{-}}\cdot G=\frac{1}{G_{-}}\cdot N_1.
      \end{equation}
Noting that  $\Delta G=G_{-}\Delta A$, $\Delta N_{1}=\Delta H-H_{-}\Delta A$, we have
          \begin{eqnarray*}
           & &\Delta f(H,G)_{s}-\frac{\Delta H_{s}}{G_{s-}}+\frac{f(H,G)_{s-}}{G_{s-}}\Delta G_{s}\\
           &=&\frac{H_{s}}{G_{s}}-\frac{H_{s-}}{G_{s-}}-\frac{\Delta H_{s}}{G_{s-}}+\frac{f(H,G)_{s-}}{G_{s-}}\Delta G_{s}\\
            &=&\frac{H_{s}}{G_{s-}(1+\Delta A_{s})}-\frac{H_{s-}}{G_{s-}}-\frac{\Delta H_{s}}{G_{s-}}+\frac{f(H,G)_{s-}}{G_{s-}}\Delta G_{s}\\
            &=&\frac{H_{s-}(\Delta A_{s})(1+\Delta A_{s})-H_{s}\Delta A_{s}}{G_{s-}(1+\Delta A_{s})}\\
            &=&-\frac{\Delta N_{1s}\Delta A_{s}}{G_{s-}(1+\Delta A_{s})},
               \end{eqnarray*}
where $A$ is a predictable process, and $N$ is a local martingale. By the property of  the Stieltjes integral, we have
     \begin{equation}
     \sum_{s\le \cdot }\Delta f(H,G)_{s}-\frac{\Delta H_{s}}{G_{s-}}+\frac{f(H,G)_{s-}}{G_{s-}}\Delta G_{s}= -\frac{\Delta A}{G_{-}(1+\Delta A)} \cdot N_1.
         \end{equation}
Thus $\Big(e^{\lambda X}/\mathcal{E}(S(\lambda))\Big)_t$ is a local martingale.

Since $e^{x}\ge x+1$ and $e^{S(\lambda)_{t}}\ge \mathcal{E}(S(\lambda)_{t})$,
     \begin{equation}
     \textsf{E} \frac{e^{\lambda X_{t}}}{e^{S(\lambda)_{t}}}\le \textsf{E} \frac{e^{\lambda X_{t}}}{\mathcal{E}(S(\lambda)_{t})}=1.
      \end{equation}
Thus,
       \begin{eqnarray*}
       S(\lambda)_{t}&\le&\sum_{m=2}^{\infty}\frac{\lambda^{m}}{m!}C(W)^{m}_{t}\\
                     &\le &  \frac{\lambda^{2}}{2(1-\lambda K)}C(W)_{t}.
       \end{eqnarray*}
Set
  \begin{equation}
  A=\{X_{t}\ge x~\text{and}~C(W)_t\le y^{2}~\text{for some}~t\},
  \end{equation}
\begin{equation}
\sigma=\inf\{t:X_t\ge x\},
\end{equation}
     we have
\begin{equation} \int _A \frac{e^{\lambda X_{\sigma}}}{e^{S(\lambda)_{\sigma}}}d \textsf{P}\le 1,
 \end{equation}
 On $A$,
 \begin{equation}
 \frac{e^{\lambda X_{\sigma}}}{e^{S(\lambda)_{\sigma}}}\ge  \exp \Big(\lambda x - \frac{\lambda^{2}y^{2}}{2(1-\lambda K)}\Big),
  \end{equation}
         then
   \begin{equation}
    \textsf{P}(A)\le \exp \Big(-\lambda x+ \frac{\lambda^{2}y^{2}}{2(1-\lambda K)}\Big).
   \end{equation}
  Take $\lambda=\frac{x}{y+Kx}$, we obtain
  \begin{eqnarray*}
   \textsf{P}(X_{t}\ge x~\text{and}~C(W)_t\le y^{2}~\text{for some}~t)\le  \exp \Big(-\frac{x^{2}}{2(y^{2}+Kx)}\Big).
 \end{eqnarray*}

{\bf Proof of Theorem \ref{mui}}

By Theorem \ref{BESTIN}, we can obtain
             \begin{equation*}
        \textsf{P}(|X^{\psi_{1}}-X^{\psi_{2}}|>u)\le  \exp\Big(-\frac{u^{2}}{2(d_{2}^{2}(\psi_{1},\psi_{2})+d_{1}(\psi_{1},\psi_{2})u)}\Big).
      \end{equation*}
     and
     \begin{equation*}
        \textsf{P}\big(|X^{\psi_{1}}-X^{\psi_{2}}|>ud_{1}(\psi_{1},\psi_{2})+\sqrt{u}d_{2}(\psi_{1},\psi_{2})\big)\le  \exp(-\frac{u}{2}).
      \end{equation*}
for $u>0$.  We set $X^{\psi_{0}}=0$.

 Consider an admissible sequence $(\mathcal{B}_{n})$ such that
   \begin{equation}
    \sum_{n\ge 0}2^{n}\Upsilon_{1}(B_{n}(\psi))\le 2\gamma_{1}(\Psi,d_{1}),\quad \forall \psi\in \Psi,
    \end{equation}
where $\Upsilon_{1}(B_{n}(\psi))$ is the diameter of the set $B_{n}(\psi)$ for $d_{1}$, and an admissible sequence $(\mathcal{C}_{n})$ such that
      \begin{equation}
      \sum_{n\ge 0}2^{n}\Upsilon_{2}(C_{n}(\psi))\le 2\gamma_{2}(\Psi,d_{2}),\quad \forall \psi\in \Psi,
      \end{equation}
where $\Upsilon_{2}(C_{n}(\psi))$ is the diameter of the set $C_{n}(\psi)$ for $d_{2}$.

We may define partition $\mathcal{A}_{n}$ for $\Psi$ as follows: $\mathcal{A}_{0}=\{\Psi\}$,
\begin{equation}
\mathcal{A}_{n}=\{B\cap C, B\in \mathcal{B}_{n-1},C\in \mathcal{C}_{n-1}\}.
\end{equation}
Consider a set $\Phi_{n}$ that contains exactly one point in  $\mathcal{A}_{n}$. For $\psi \in \Psi$, $\pi_n(\psi)$ is the element of $\Phi_{n}$
 that belong to $A_{n}(\psi)$.  We can easily obtain
     \begin{equation}
     X^{\psi}-X^{\psi_{0}}=\sum_{n\ge 1}(X^{\pi_{n}(\psi)}-X^{\pi_{n-1}(\psi)}).
     \end{equation}
Let $\Lambda_n$ be the event defined by
\begin{equation}
|X^{\pi_{n}(\psi)}-X^{\pi_{n-1}(\psi)}|\le u\big(2^{n}d_{1}(\pi_{n}(\psi),\pi_{n-1}(\psi))+2^{n/2}d_{2}(\pi_{n}(\psi),\pi_{n-1}(\psi))\big).
\end{equation}
For $u>1$,  it easily follows
  \begin{eqnarray*}
    \textsf{P}(\Lambda_n^c)\le \exp(-u 2^{n-1}).
  \end{eqnarray*}
Letting $\Omega_{u}=\cap_{n\ge 1}\Lambda_n$,  we obtain
          \begin{equation}
          \textsf{P}(\Omega_{u}^{c})\le p(u):=\sum_{n\ge 1}2\cdot 2^{2^{n+1}}\exp(-u 2^{n-1}).\end{equation}
On $\Omega_{u}$,
 \begin{equation}
 \big|X^{\psi}-X^{\psi_{0}}\big|\le u\sum_{n\ge 1}2^{n} d_{1}\big(\pi_{n}(\psi),\pi_{n-1}(\psi)\big)+2^{n/2}d_{2}\big(\pi_{n}(\psi),\pi_{n-1}(\psi)\big).
 \end{equation}
Hence,
       \begin{equation}
       \sup_{\psi \in \Psi}|X^{\psi}-X^{\psi_{0}}|\le uS,
       \end{equation}
where
        \begin{equation}
        S:=\sup_{\psi \in \Psi}\sum_{n\ge 1}2^{n}(d_{1}(\pi_{n}(\psi),\pi_{n-1}(\psi))+2^{n/2}d_{2}(\pi_{n}(\psi),\pi_{n-1}(\psi)).
        \end{equation}
Thus
      \begin{equation}
      \textsf{P}\Big(\sup_{\psi \in \Psi}|X^{\psi}-X^{\psi_{0}}|>uS\Big)\le p(u)
      \end{equation}
Obviously,
            \begin{equation}
            p(u)\le C\exp{(-\frac{u}{2})}.
            \end{equation}
When $n\ge 2$, we have $\pi_{n}(\psi)$, $\pi_{n-1}(\psi)\in A_{n-1}(\psi)\subset B_{n-2}(\psi) $, so that
       \begin{equation}
       d_{1}\big(\pi_{n}(\psi),\pi_{n-1}(\psi)\big)\le \Upsilon_{1}(B_{n-2}(\psi)).
       \end{equation}
Thus
   \begin{equation}
   \sum_{n\ge 1}2^{n}d_{1}(\pi_{n}(\psi))\le C\sum_{n\ge 1}2^{n}\Upsilon_{1}(B_{n-2}(\psi)) \le C \gamma_1(\Psi,d_{1})).
   \end{equation}

Proceeding similarly for $d_{2}$, we obtain
           \begin{equation}
           |X^{\psi}-X^{\psi_{0}}|\le Cu\big(\gamma_1(\Psi,d_{1})+\gamma_2(\Psi,d_{2})\big)
            \end{equation}

in $\Omega_{u}$. Thus
      \begin{equation}
      \textsf{P}\Big(\sup_{\psi \in \Psi}|X^{\psi}-X^{\psi_{0}}|>Cu\big(\gamma_1(\Psi,d_{1})+\gamma_2(\Psi,d_{2})\big)\Big)\le C\exp{(-\frac{u}{2})},
      \end{equation}
we complete the proof of  (\ref{ui}).
 
 We can obtain (\ref{mr}) through
    $$\textsf{E}|\xi|=\int_{0}^{+\infty}\textsf{P}(|\xi|>u)du. $$

\begin{remark}  Let   $(\Psi, d)$  be a metric space, and let $\big(X^\psi, \psi\in \Psi\big)$ be a family of stochastic processes  defined on a probability space $(\Omega, \mathcal{F}, \textsf{ P})$. A primary problem   is to study the bounds for $\textsf{E}\sup_{\psi\in \Psi}X^\psi$,  where
\begin{equation}
\textsf{E}\sup_{\psi\in \Psi}S^\psi=\sup\Big\{\textsf{E}\sup_{\psi\in \Phi}S^\psi; \Phi\subset \Psi, \Phi~\text{finite}\Big\}.
\end{equation}
However, this is not easy at all for general processes. The generic chaining method was first invented by Talagrand in a series of articles to deal with $\textsf{E}\sup_{\psi\in \Psi}X^\psi$. In particular,  under the increment condition
\begin{equation}
\textsf{P}(|X^{\psi_{1}}-X^{\psi_{2}}|>u)\le  2\exp\Big(-\frac{u^{2}}{2 d_{2}^{2}(\psi_{1},\psi_{2}) }\Big), \label{T0}
\end{equation}
Talagrand \cite{t} proved
\begin{equation*}
\textsf{P}\Big(\sup_{\psi \in \Psi}|X^{\psi}-X^{\psi_{0}}|>Cu\gamma_2(\Psi,d_{2})\Big)\le C\exp{(-\frac{u^2}{2})}. \label{TIN}
\end{equation*}
In addition, if  the condition (\ref{T0}) is replaced by
\begin{equation*}
\textsf{P}\big(|X^{\psi_{1}}-X^{\psi_{2}}|>u\big)\le  2\exp\Big(-\min\{\frac{u^{2}}{2 d_{2}^{2}(\psi_{1},\psi_{2}) },\frac{u}{2 d_{1}(\psi_{1},\psi_{2})}\}\Big), \label{TINC2}
\end{equation*}
then it follows
 \begin{equation}
 \textsf{P}\Big(\sup_{\psi \in \Psi}|X^{\psi}-X^{\psi_{0}}|>Cu(\gamma_1(\Psi,d_{1})+\gamma_2(\Psi,d_{2}))\Big)\le C\exp{(-\frac{u}{2})}.
 \end{equation}
Theorem \ref{mui} implies that if the following increment condition is satisfied
\begin{equation*}
\textsf{P}\big(|X^{\psi_{1}}-X^{\psi_{2}}|>u\big)\le  \exp\Big(-\frac{u^{2}}{2(d_{2}^{2}(\psi_{1},\psi_{2})+d_{1}(\psi_{1},\psi_{2})u)}\Big),
 \end{equation*}
then (\ref{ui}) still holds true.

\end{remark}

\section{Applications}

In this section we shall first apply the previous results to functional index empirical processes. Consider a sequence of adapted stationary time series $(Y_n)_{n\ge 0}$ on the discrete time  stochastic basis $(\Omega, \mathcal{F}, (\mathcal{F}_{n})_{n\ge 0},\textsf{ P})$.   Let $\Psi $ be the space of all bounded measurable functions in $\mathbb{R}$. For a $\psi\in \Psi$, define 
 \begin{equation}
 X_n^\psi= \sum_{k=1}^n\big(\psi(Y_k)-\textsf{E}(\psi(Y_k)|\mathcal{F}_{k-1})\big) .  \label{m}
 \end{equation}
Obviously, for each $\psi$, ${X_n^\psi}_{n\ge 0}$ is a discrete time martingale. Note also $X_n^\psi$ can be realized through a stochastic integral of $\psi$ with respect to a multivariate point process. In fact, let $T_k=k$, $X_k=Y_k$, $\psi(k, x)=\psi_(x_k)$, then 
\begin{equation}
 X_n^\psi=  \psi*(\mu-\nu)_n.
 \end{equation}
A simple computation shows
\begin{equation}
\psi*\nu(dt,dx)_n=\sum_{k=1}^{n}\textsf{E}[\psi(Y_k)|\mathcal{F}_{k-1}], 
 \end{equation}
 and
\begin{equation}
C(\psi)_n=\sum_{k=1}^{n}\textsf{E}[(\psi(Y_k))^2|\mathcal{F}_{k-1}].
\end{equation}
 As a direct consequence of  Theorem \ref{BESTIN}, we have
\begin{theorem}\label{ebe}
 Suppose that, for all $k>0$ and some $0< K<\infty$
 \begin{equation}\label{nc1}
 \textsf{E}\big[\max\{0, \psi(Y_k)\}^m|\mathcal{F}_{k-1}\big]\le \frac{m!}{2}K^{m-2}\textsf{E}\big[(\psi(Y_k))^2|\mathcal{F}_{k-1}\big]
 \end{equation}
 \begin{equation}\label{nc2}
 \textsf{E}\big[\max\{0, \psi(Y_k)\}|\mathcal{F}_{k-1}\big]\le K. 
  \end{equation}
 Then for each $x>0,$ $y>0$,
 \begin{equation}
 \textsf{P}(|X_n^\psi |\ge x~ \text{and}~C(\psi)_n\le y^{2}~\text{for some}~n)\le \exp\Big(-\frac{x^{2}}{2(xK+y^{2})}\Big).
 \end{equation}
\end{theorem}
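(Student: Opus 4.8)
\textbf{Proof of Theorem \ref{ebe}.} The plan is to read the statement off Theorem \ref{BESTIN} by realising the discrete martingale $X_n^\psi$ as a stochastic integral against the multivariate point process generated by $(Y_k)_{k\ge 1}$, exactly as indicated just above the statement: take $T_k=k$, $X_k=Y_k$, let $\mu$ be the point process (\ref{mp}) and $\nu$ its compensator (\ref{mp1}), so that the transition kernel $K_{\cdot,k}$ is the conditional law of $Y_k$ given $\mathcal{F}_{k-1}$ and $A_t=\lfloor t\rfloor$. The one structural feature that makes everything collapse is that exactly one atom falls at each integer time: $a_k=\nu(\{k\}\times\mathbb{R})=1$ for every integer $k$ and $a_t=0$ otherwise. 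Hence $\hat W_k=\textsf{E}[\psi(Y_k)|\mathcal{F}_{k-1}]$ and $W*(\mu-\nu)_n=X_n^\psi$.

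Next I would specialise the three functionals of Theorem \ref{BESTIN}. Since $1-a_k=0$ at every integer time, the sums $\sum_{s\le t}(1-a_s)(\cdots)$ in (\ref{Zeta}), (\ref{CW2}) and (\ref{QW2}) vanish, and the $\nu$-integrals become sums of conditional expectations:
\begin{align*}
\Xi(W)_n &= \sum_{k=1}^n \textsf{E}\big[\max\{0,\psi(Y_k)-\hat W_k\}|\mathcal{F}_{k-1}\big],\\
Q(W,m)_n &= \sum_{k=1}^n \textsf{E}\big[\max\{0,\psi(Y_k)-\hat W_k\}^m|\mathcal{F}_{k-1}\big],\\
C(W)_n &= \sum_{k=1}^n \mathrm{Var}\big(\psi(Y_k)|\mathcal{F}_{k-1}\big)\ \le\ \sum_{k=1}^n \textsf{E}\big[\psi(Y_k)^2|\mathcal{F}_{k-1}\big]=C(\psi)_n.
\end{align*}
It then remains to check that the hypotheses (\ref{nc1})--(\ref{nc2}) force the two conditions in (\ref{CW3}) with the same constant $K$. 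Granting that, Theorem \ref{BESTIN} gives $\textsf{P}(|X_n^\psi|\ge x\text{ and }C(W)_n\le y^2\text{ for some }n)\le\exp(-x^2/(2(xK+y^2)))$, and since $C(W)_n\le C(\psi)_n$ the event $\{C(\psi)_n\le y^2\}$ is contained in $\{C(W)_n\le y^2\}$, which delivers the asserted inequality; the passage from ``for some $t$'' to ``for some $n$'' costs nothing, discrete time being a special case.

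The step I expect to be the main obstacle is precisely this last verification: passing from the centred increments $\psi(Y_k)-\hat W_k$ occurring in $\Xi$, $Q$ and $C$ to the uncentred $\psi(Y_k)$ on which (\ref{nc1})--(\ref{nc2}) are imposed. I would argue term by term, using $\max\{0,\psi(Y_k)-\hat W_k\}\le\max\{0,\psi(Y_k)\}+\max\{0,-\hat W_k\}$ together with the conditional Jensen bounds $\max\{0,-\hat W_k\}\le\textsf{E}[\max\{0,-\psi(Y_k)\}|\mathcal{F}_{k-1}]$ and, for the $m$-th moments, convexity of $t\mapsto t^m$ on $[0,\infty)$, to dominate $\textsf{E}[\max\{0,\psi(Y_k)-\hat W_k\}^m|\mathcal{F}_{k-1}]$ by a multiple of $\textsf{E}[\max\{0,\psi(Y_k)\}^m|\mathcal{F}_{k-1}]$, whereupon (\ref{nc1}) takes over. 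The delicate point is to carry this comparison through without inflating the Bernstein constant $K$, which is why it is natural to state (\ref{nc1})--(\ref{nc2}) in the one-sided form written (or in a symmetric form in $|\psi(Y_k)|$). Apart from this elementary but careful bookkeeping, the result is a direct specialisation of Theorem \ref{BESTIN}.
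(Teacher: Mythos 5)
Your overall route is the paper's: realise $X_n^\psi$ as $\psi*(\mu-\nu)_n$ for the point process with $T_k=k$, $X_k=Y_k$ (so $a_k=1$ at integer times) and invoke Theorem \ref{BESTIN}. The genuine gap is that the decisive step --- checking that (\ref{nc1})--(\ref{nc2}) imply the hypotheses (\ref{CW3}) --- is only announced, not carried out, and the comparison you sketch cannot be completed as described. In the faithful discrete specialisation the quantities of Theorem \ref{BESTIN} are the \emph{centred} ones: by (\ref{Zeta}), (\ref{CW2}), (\ref{QW2}) they are built from $\max\{0,\psi(Y_k)-\hat W_k\}$ with $\hat W_k=\textsf{E}[\psi(Y_k)|\mathcal{F}_{k-1}]$, whereas (\ref{nc1})--(\ref{nc2}) constrain only the positive part of the uncentred $\psi(Y_k)$. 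Your bound $\max\{0,\psi(Y_k)-\hat W_k\}\le \max\{0,\psi(Y_k)\}+\max\{0,-\hat W_k\}$ leaves the term $\max\{0,-\hat W_k\}\le \textsf{E}[\max\{0,-\psi(Y_k)\}|\mathcal{F}_{k-1}]$, i.e.\ the negative part of $\psi(Y_k)$, about which (\ref{nc1})--(\ref{nc2}) say nothing; Cauchy--Schwarz only yields $(\textsf{E}[\psi(Y_k)^2|\mathcal{F}_{k-1}])^{1/2}$, which is not bounded in terms of $K$. Concretely, if conditionally on $\mathcal{F}_{k-1}$ the variable $\psi(Y_k)$ takes the values $-M$ and $0$ with probability $1/2$ each, then $\max\{0,\psi(Y_k)\}\equiv 0$, so (\ref{nc1})--(\ref{nc2}) hold for every $K>0$, while $\hat W_k=-M/2$ and $\textsf{E}[\max\{0,\psi(Y_k)-\hat W_k\}|\mathcal{F}_{k-1}]=M/4$; hence the centred conditions (\ref{CW3}) fail for small $K$, and no amount of bookkeeping can recover them with the same (or even a uniformly inflated) constant. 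So, as written, your argument does not deliver the stated inequality.

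For comparison, the paper does not attempt this centred-to-uncentred reduction at all: as spelled out in the Remark following the theorem, it reads the conditions (\ref{CW3}) directly with the uncentred quantities $\Xi(\psi)_n=\textsf{E}[\max\{0,\psi(Y_n)\}|\mathcal{F}_{n-1}]$, $Q(\psi,m)_n=\sum_{k\le n}\textsf{E}[\max\{0,\psi(Y_k)\}^m|\mathcal{F}_{k-1}]$ and $C(\psi)_n=\sum_{k\le n}\textsf{E}[\psi(Y_k)^2|\mathcal{F}_{k-1}]$, for which (\ref{nc1})--(\ref{nc2}) are literally the required bounds, and then treats the theorem as an immediate specialisation of Theorem \ref{BESTIN}. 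To make your proof complete you would have to justify that the conclusion of Theorem \ref{BESTIN} (equivalently, the bound on $S(\lambda)$ in its proof, i.e.\ on $\textsf{E}[e^{\lambda(\psi(Y_k)-\hat W_k)}-1-\lambda(\psi(Y_k)-\hat W_k)\,|\,\mathcal{F}_{k-1}]$) can be obtained from the uncentred conditions (\ref{nc1})--(\ref{nc2}) directly, rather than route through the centred hypotheses (\ref{CW3}); the reduction you propose is not merely delicate, it is false with the constant $K$ as stated.
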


\begin{remark}If we denote
\begin{equation}
\Xi(\psi)_{n}=\textsf{E}[\max\{0, \psi(Y_n)\}|\mathcal{F}_{n-1}],
\end{equation}
and
\begin{equation}
Q(\psi, m)_n=\sum_{k=1}^{n}\textsf{E}[\max\{0, \psi(Y_k)\}^m|\mathcal{F}_{k-1}],~~m\ge 3. 
\end{equation}
The conditions (\ref{nc1}) and (\ref{nc2})   imply the conditions (\ref{CW3}) in Theorem \ref{BESTIN} for $\Xi(\psi)_{n}$ and $Q(\psi, m)_n$.
\end{remark}
Furthermore, we define the metric  for fix $n>0$
\begin{equation}
d_{1}(\psi_1,\psi_2)=\max_{1\le k\le n}||\textsf{E}[\max\{0, \psi_1(Y_k)-\psi_2(Y_k)\}|\mathcal{F}_{k-1}]||_\infty,
\end{equation}
\begin{equation}
d_{2}(\psi_1,\psi_2)=\max_{1\le k\le n} ||\textsf{E}[ (\psi_1(Y_k)-\psi_2(Y_k))^2|\mathcal{F}_{k-1}]||^{1/2}_{\infty}.
\end{equation}
where $||\cdot||_\infty$ stands for norm of $L^{\infty}$.

\begin{theorem}\label{emui}
Suppose that, for all $\psi\in \Psi$ and some $0<K<\infty$, (\ref{nc1}) and (\ref{nc2}) hold. Then
\begin{equation}
\textsf{P} \Big(\sup_{\psi\in \Psi}\big|X_n^\psi\big|\ge Cu \big(\sqrt{n}\gamma_2(\Psi,d_{2})+n\gamma_1(\Psi,d_{1})\big)\Big)\le C\exp(-\frac{u}{2}). \label{eui}
\end{equation}
 
\end{theorem}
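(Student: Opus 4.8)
The plan is to derive Theorem \ref{emui} as a direct specialization of Theorem \ref{mui} to the discrete-time martingale setting described above. First I would observe that, under the identification $T_k = k$, $X_k = Y_k$, $\psi(k,x) = \psi(x_k)$, we have $X_n^\psi = \psi*(\mu-\nu)_n$, so the family $\{X_n^\psi : \psi \in \Psi\}$ is exactly a family of stochastic integrals of predictable functions with respect to a multivariate point process, with $T = n$ fixed. By the preceding Remark, hypotheses (\ref{nc1}) and (\ref{nc2}) imply the conditions (\ref{CW3}), hence also (\ref{mu2}), so Theorem \ref{mui} applies and yields
\begin{equation*}
\textsf{P}\Big(\sup_{\psi\in\Psi}|X_n^\psi| \ge Cu\,(\gamma_2(\Psi,\bar d_{2}) + \gamma_1(\Psi,\bar d_{1}))\Big) \le C\exp(-u/2),
\end{equation*}
where $\bar d_1, \bar d_2$ are the metrics of Theorem \ref{mui}, namely $\bar d_1(\psi_1,\psi_2) = \|\Xi(W^{\psi_1}-W^{\psi_2})_n\|_\infty$ and $\bar d_2(\psi_1,\psi_2) = \|C(W^{\psi_1}-W^{\psi_2})_n\|_\infty^{1/2}$.

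The remaining work is purely a matter of comparing $\bar d_i$ with the $d_i$ defined in this section. Using the disintegration with $dA_t$ supported on the integers and $a_k = 1$ for each $k$, the stochastic integral $\hat W^\psi_k$ equals $\textsf{E}[\psi(Y_k)\mid\mathcal F_{k-1}]$, and the terms $(1-a_s)$ in $\Xi$ and $C$ vanish; one then computes $\Xi(W^{\psi_1}-W^{\psi_2})_n = \sum_{k=1}^n \textsf{E}[\max\{0,(\psi_1-\psi_2)(Y_k) - \textsf{E}[(\psi_1-\psi_2)(Y_k)\mid\mathcal F_{k-1}]\}\mid\mathcal F_{k-1}]$ and similarly $C(W^{\psi_1}-W^{\psi_2})_n$ is a sum of $n$ conditional-variance-type terms. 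Each of the $n$ summands is bounded in $L^\infty$ by the corresponding single-index quantity appearing in the definitions of $d_1$ and $d_2$ of this section (up to the usual absolute constants coming from centering, which only inflate things by a universal factor), so that $\bar d_1(\psi_1,\psi_2) \le C n\, d_1(\psi_1,\psi_2)$ and $\bar d_2(\psi_1,\psi_2) \le C\sqrt n\, d_2(\psi_1,\psi_2)$. Since $\gamma_\alpha$ is positively homogeneous of degree one in the metric (scaling the metric by $\lambda$ scales $\gamma_\alpha$ by $\lambda$), it follows that $\gamma_1(\Psi,\bar d_1) \le Cn\,\gamma_1(\Psi,d_1)$ and $\gamma_2(\Psi,\bar d_2) \le C\sqrt n\,\gamma_2(\Psi,d_2)$. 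Substituting these bounds into the inequality from Theorem \ref{mui} gives precisely (\ref{eui}).

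I expect the only genuinely delicate point to be the factor-of-$n$ (respectively $\sqrt n$) bookkeeping: one must check that summing $n$ terms, each controlled by the $L^\infty$ norm of a single-step conditional quantity, really does produce the factor $n$ for $d_1$ and $\sqrt n$ for $d_2$ — the asymmetry arising because $C(\cdot)$ is a sum of squares while $\Xi(\cdot)$ is a sum of first moments, and because $d_2$ is defined via the square root. Handling the centering (the difference between $\textsf{E}[\max\{0,Z\}\mid\mathcal F]$ and $\textsf{E}[\max\{0,Z - \textsf{E}[Z\mid\mathcal F]\}\mid\mathcal F]$) needs a short argument — e.g. $\max\{0, Z - \textsf{E}[Z\mid\mathcal F]\} \le \max\{0,Z\} + |\textsf{E}[Z\mid\mathcal F]| \le \max\{0,Z\} + \textsf{E}[|Z|\mid\mathcal F]$ — but this only introduces a harmless universal constant and does not affect the rate. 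Everything else is a direct invocation of Theorem \ref{mui} together with the homogeneity of Talagrand's $\gamma_\alpha$ functionals.
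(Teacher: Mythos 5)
Your proposal is correct and takes essentially the same route as the paper: the paper also reduces to Theorem \ref{mui} (run with the scaled metrics $n\,d_1$ and $\sqrt{n}\,d_2$, which dominate the increments) and then invokes the homogeneity $\gamma_1(\Psi,n d_1)=n\gamma_1(\Psi,d_1)$, $\gamma_2(\Psi,\sqrt{n} d_2)=\sqrt{n}\gamma_2(\Psi,d_2)$. Your extra bookkeeping --- bounding the conditional variance by the conditional second moment to get the $\sqrt{n}$ factor, and controlling the centered positive part in the $d_1$ comparison --- is precisely the step the paper leaves implicit in ``it follows directly from the proof of Theorem \ref{mui}''.
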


\begin{proof}

It follows directly from  the proof of Theorem \ref{mui} 
\begin{equation}
\textsf{P} \Big(\sup_{\psi\in \Psi}\big|X_n^\psi\big|\ge Cu \big(\gamma_2(\Psi, \sqrt{n}d_{2})+ \gamma_1(\Psi, nd_{1})\big)\Big)\le C\exp(-\frac{u}{2}).
\end{equation}
Note 
\begin{equation}
\gamma_2(\Psi,\sqrt{n}d_{2})=\sqrt{n}\gamma_2(\Psi,d_{2}), \quad \gamma_1(\Psi, nd_{1})=n\gamma_1(\Psi,d_{1}) 
\end{equation}
Then (\ref{eui}) easily holds.
 \end{proof}

As a special example of functional index empirical processes, we  consider  the   nonparametric maximum likelihood estimators below.

     Let $ \mathcal{P}=\{P_{\theta}, \theta\in \Theta\}$ be a family of probability measures, we assume that $ \mathcal{P}$ is dominated by a Lebesgue measure.  Denote the density of $P_{\theta}$ by $f_{\theta}=\frac{dP_{\theta}}{dx}$, $\theta\in \Theta$. Fix  a $\theta_0\in \Theta$ such that $f_{\theta_0}>0$,  and let $X_1, X_2, \cdots$ be   a sequence  of i.i.d. observations from $P_0=P_{\theta_0}$. Define the empirical distribution
   \begin{equation}
   P_n=\frac{1}{n}\sum_{k=1}^{n}\varepsilon_{(k, X_k)}(dt\times dx)
   \end{equation}
  on the basis of the first $n$ observations.  The maximum likelihood estimator $\hat{\theta}_n$ of $\theta_0$ is defined by
    \begin{equation}
    \int \log(f_{\hat{\theta}_n})d P_{n}=\max_{\theta \in \Theta} \int  \log(f_{\theta})d P_{n}. \label{Max}
    \end{equation}
    We assume throughout that a $\hat{\theta}_n$ exists.
    
 It is very important  to study the rate of convergence of $f_{\hat{\theta}_n}$ to $f_{\theta}$ in the theory of nonparametric statistical inference. Recall  the  Hellinger distance is usually used to describe the distance between two probability measures. In particular, for a pair $(P, \bar{P}) $ of probability measures  the Hellinger distance $H(P,\bar{P})$  is defined by
 \begin{equation}
 H^2(P,\bar{P})=\frac{1}{2}\int \Big(\sqrt{\frac{dP}{dQ}}-\sqrt{\frac{d\bar{P}}{dQ}}\Big)^2dQ
 \end{equation}
where $Q$ is a  measure dominating $P$ and $\bar{P}$.  

 In our setting $Q$ is a Lebesgue measure,  $f=\frac {dP}{dx}$, $\bar{f}=\frac {d\bar{P}}{dx}$, and we  simply write $h(f, \bar{f})=H(P, \bar{P})$.  It is natural to ask what the rate of  convergence for $f_{\hat{\theta}_n}$ to $f_{\theta_0}$  in terms of  $h^2(f_{\hat{\theta}_n}, f_{\theta_0})$ . We have the following result in this aspect.     Denote $\mathcal{G}=\{g_{\theta}:=\sqrt{\frac{f_{\theta}}{f_{\theta_{0}}}}-1,~~\theta\in \Theta\}$,  and set
\begin{equation}
d_1(g_{\theta_1},  g_{\theta_2} )=E_{\theta_0}|g_{\theta_1}(X_1)-g_{\theta_2}(X_1)|,
  \end{equation}
\begin{equation}
d_2(g_{\theta_1},  g_{\theta_2} )=\sqrt{E_{\theta_0}(g_{\theta_1}(X_1)-g_{\theta_2}(X_1))^2}.
 \end{equation}
  \begin{theorem}\label{muii}
Suppose that there is a positive constant $0<K<\infty$ such that for all $\theta\in \Theta$ 
 \begin{equation}
 E_{\theta_0}|g_{\theta}(X_1)|\le K,~~E_{\theta_0}|g_{\theta}(X_1)|^m\le \frac{m!}{2}K^{m-2}E_{\theta_0}|g_{\theta}(X_1)|^2,~~m\ge 3.
  \end{equation}
      Then it follows
\begin{equation}\label{uii}
\textsf{P} \big(h^2(f_{\hat{\theta}_n}, f_{\theta_0})\ge  u \big)\le C\exp\Big(-\frac{n u}{2C( \sqrt n\gamma_2(\mathcal{G},d_{2})+ \gamma_1(\mathcal{G},d_{1}))}\Big).
\end{equation}
 
\end{theorem}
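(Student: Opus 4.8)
The plan is to combine the classical "basic inequality" for the MLE with the uniform deviation bound of Theorem \ref{emui}. First I would recall the standard starting point in the theory of nonparametric maximum likelihood: writing $\bar f_n = (f_{\hat\theta_n}+f_{\theta_0})/2$ for the convex combination, the defining inequality \eqref{Max} yields, after the usual manipulation with $\log$ and concavity,
\begin{equation}
h^2\big(\bar f_n, f_{\theta_0}\big) \le \int \Big(\sqrt{\tfrac{\bar f_n}{f_{\theta_0}}}-1\Big)\, d(P_n - P_0), \nonumber
\end{equation}
and then relating $h^2(f_{\hat\theta_n},f_{\theta_0})$ to $h^2(\bar f_n, f_{\theta_0})$ by the elementary inequality $h^2(f,f_0)\le 4\, h^2\big((f+f_0)/2, f_0\big)$ (or a similar constant). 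This reduces everything to controlling the empirical process $\int g\, d(P_n-P_0)$ indexed by $g$ in a Hellinger ball around $0$ inside $\mathcal G$.

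Second, I would identify the empirical process $\sqrt n \int g\, d(P_n-P_0)$ with $X_n^{g}$ in the notation of Section 3: with $Y_k=X_k$ i.i.d.\ from $P_0$ and $\psi=g$, the conditional expectations are just ordinary expectations, so $X_n^g = \sum_{k=1}^n\big(g(X_k)-E_{\theta_0}g(X_k)\big) = n\int g\, d(P_n-P_0)$, the metrics $d_1,d_2$ of Section 3 collapse to the $d_1,d_2$ defined just before the theorem statement, and the Bernstein hypotheses \eqref{nc1}--\eqref{nc2} are exactly the hypotheses assumed here on $g_\theta$. Hence Theorem \ref{emui} applies verbatim to $\mathcal G$ and gives
\begin{equation}
\textsf P\Big(\sup_{g\in\mathcal G}\big|\textstyle\int g\, d(P_n-P_0)\big| \ge \tfrac{Cu}{n}\big(\sqrt n\,\gamma_2(\mathcal G,d_2)+\gamma_1(\mathcal G,d_1)\big)\Big)\le C\exp(-u/2). \nonumber
\end{equation}

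Third, I would close the loop: on the complement of that bad event, combine it with the basic inequality to get $h^2(f_{\hat\theta_n},f_{\theta_0})\le \tfrac{C'u}{n}\big(\sqrt n\,\gamma_2(\mathcal G,d_2)+\gamma_1(\mathcal G,d_1)\big)$. Setting this right-hand side equal to the target level, i.e.\ solving for $u$ in terms of the prescribed threshold, converts $C\exp(-u/2)$ into the stated bound $C\exp\big(-\tfrac{nu}{2C(\sqrt n\,\gamma_2(\mathcal G,d_2)+\gamma_1(\mathcal G,d_1))}\big)$, which is exactly \eqref{uii}.

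The main obstacle, I expect, is not the chaining — that is delivered wholesale by Theorem \ref{emui} — but the reduction step: one must be careful that $\sup_{g\in\mathcal G}$ over the whole class (rather than over a Hellinger ball shrinking with $n$) is what the hypotheses and the conclusion actually call for, so that no peeling/localization argument is needed and the crude bound $h^2(\bar f_n,f_0)\le \sup_{g\in\mathcal G}\int g\, d(P_n-P_0)$ suffices. A secondary delicate point is the passage from the defining maximization \eqref{Max} to the quadratic inequality in $h^2(\bar f_n,f_0)$: this uses $\log t \le 2(\sqrt t - 1)$ and the concavity of $\log$ applied to $\tfrac12 f_{\hat\theta_n}+\tfrac12 f_{\theta_0}$, and one should track the absolute constant through $h^2(f_{\hat\theta_n},f_0)\lesssim h^2(\bar f_n,f_0)$ so that it can be absorbed into the generic $C$.
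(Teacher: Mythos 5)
Your overall skeleton (a basic MLE inequality, then the uniform bound of Theorem \ref{emui} over $\mathcal{G}$, then reparametrizing $u$ to get \eqref{uii}) is the same as the paper's, but the reduction step you propose has a genuine gap. After convexifying with $\bar f_n=(f_{\hat\theta_n}+f_{\theta_0})/2$, the empirical-process term in your basic inequality is $\int\big(\sqrt{\bar f_n/f_{\theta_0}}-1\big)\,d(P_n-P_0)$, and the function $\sqrt{\bar f_n/f_{\theta_0}}-1$ does \emph{not} belong to $\mathcal{G}=\{\sqrt{f_\theta/f_{\theta_0}}-1:\theta\in\Theta\}$. Hence the event you control via Theorem \ref{emui}, namely that $\sup_{g\in\mathcal{G}}\big|\int g\,d(P_n-P_0)\big|$ is small, does not bound that term, and the ``crude bound $h^2(\bar f_n,f_{\theta_0})\le\sup_{g\in\mathcal{G}}\int g\,d(P_n-P_0)$'' on which your third step rests is unjustified as written. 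Repairing it along your lines would force you to run the chaining over the convexified class $\{\sqrt{(f_\theta+f_{\theta_0})/(2f_{\theta_0})}-1\}$: you would then need to re-verify the Bernstein hypotheses there (not automatic from the stated ones, since the right-hand side of \eqref{nc1} involves the second moment of the convexified function; one needs a two-sided pointwise comparison $|\bar g_\theta|\asymp|g_\theta|$ via the $1$-Lipschitz map $t\mapsto\sqrt{(1+t^2)/2}$), show that its $\gamma$-functionals are dominated by $\gamma_i(\mathcal{G},d_i)$, and track the extra constant in $h^2(f,f_{0})\le C\,h^2\big((f+f_{0})/2,f_{0}\big)$ (which, incidentally, is larger than your proposed $4$). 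None of this is addressed in the proposal.

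The paper avoids all of this by not convexifying at all: applying $\tfrac12\log t\le\sqrt t-1$ directly to $t=f_\theta/f_{\theta_0}$ and using $\int\big(\sqrt{f_\theta/f_{\theta_0}}-1\big)\,dP_0=-h^2(f_\theta,f_{\theta_0})$, together with \eqref{Max} at $\theta=\hat\theta_n$, gives $h^2(f_{\hat\theta_n},f_{\theta_0})\le\int g_{\hat\theta_n}\,d(P_n-P_0)$ with $g_{\hat\theta_n}\in\mathcal{G}$, so the uniform bound over $\mathcal{G}$ applies exactly as you intend in your second and third steps. Those steps of yours (the identification $X_n^g=n\int g\,d(P_n-P_0)$ for i.i.d.\ data and the reparametrization converting $C\exp(-u/2)$ into \eqref{uii}) are correct and are precisely what the paper does; the fix is simply to drop $\bar f_n$, after which your argument collapses to the paper's proof.
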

   \begin{proof}

    Since $\log(1+x)\le x$ for any $x>-1$, 
     \begin{eqnarray*}
  0 &\le &  \frac{1}{2}\int \log \frac{f_{\theta}}{f_{\theta_0}} dP_n- \frac{1}{2}\int (f_{\theta}-f_{\theta_0})dx\\
   &\le&\int\Big(\sqrt{\frac{f_{\theta}}{f_{\theta_{0}}}}-1\Big) dP_n- \frac{1}{2}\int (f_{\theta}-f_{\theta_0})dx \\
   &=&\int\Big(\sqrt{\frac{f_{\theta}}{f_{\theta_{0}}}}-1\Big) d(P_n-P_0)+\int\Big(\sqrt{\frac{f_{\theta}}{f_{\theta_{0}}}}-1\Big) dP_0- \frac{1}{2}\int (f_{\theta}-f_{\theta_0})dx \\
&=&\int\Big(\sqrt{\frac{f_{\theta}}{f_{\theta_{0}}}}-1\Big) d(P_n-P_0)+\int\sqrt{f_{\theta}f_{\theta_{0}}} dx- \frac{1}{2}\int (f_{\theta}+f_{\theta_0})dx\\
&=&\int(\sqrt{\frac{f_{\theta}}{f_{\theta_{0}}}}-1) d(P_n-P_0)-h^2(f_{\theta}, f_{\theta_0}).
 \end{eqnarray*}
 Also, according to (\ref{Max}),
\begin{equation}
0\le  \frac{1}{2}\int \log(\frac{f_{\hat{\theta}_n}}{f_{\theta_0}})dP_n- \frac{1}{2}\int (f_{\hat{\theta}_n}-f_{\theta_0})dx.
\end{equation}
Thus we have
      \begin{eqnarray*}
      & &\textsf{P} \Big(h^2(f_{\hat{\theta}_n}, f_{\theta_0})\ge  Cu (\frac{1}{\sqrt{n}}\gamma_2(\mathcal{G},d_{2})+\frac{1}{n}\gamma_1(\mathcal{G},d_{1}))\Big)\\
      &\le&\textsf{P} \Big(\int(\sqrt{\frac{f_{\hat{\theta}_n}}{f_{\theta_{0}}}}-1) d(P_n-P_0)\ge  Cu (\frac{1}{\sqrt{n}}\gamma_2(\mathcal{G},d_{2})+\frac{1}{n}\gamma_1(\mathcal{G},d_{1}))\Big).
      \end{eqnarray*}
    We now can  complete the proof by Theorem \ref{emui}. 
\end{proof}

\begin{remark}
van de Geer \cite{v., v} discussed the similar problem on maximum likelihood estimators.  To our knowledge, Theorem \ref{muii} is  new in this area. We also remark that Theorem \ref{muii} can be extended  by Theorem    \ref{mui} and \ref{emui} to the stationary sample case with suitable maximum likelihood estimators. It is left to future work .
 \end{remark}

{\bf Acknowledgments}

This research work is support by the National Natural Science Foundation of China (No. 11371317, 11171303) and the Fundamental Research Fund of Shandong University (No. 2016GN019). 

\section*{Reference}

\bibliographystyle{amsplain}

 \end{document}